\newtheorem{theorem}{Theorem}
\theoremstyle{plain}
\newtheorem{definition}{Definition}
\newtheorem{remark}{Remark}
\numberwithin{equation}{section}
\begin{document}
\title[FRACTIONAL INEQUALITIES]{FRACTIONAL INTEGRAL INEQUALITIES FOR
DIFFERENT FUNCTIONS}
\author{M.EM\.{I}N \"{O}ZDEM\.{I}R$^{\blacktriangle }$}
\address{$^{\blacktriangle }$ATAT\"{U}RK UNIVERSITY, K.K. EDUCATION FACULTY,
DEPARTMENT OF MATHEMATICS, 25240, CAMPUS, ERZURUM, TURKEY}
\email{emos@atauni.edu.tr}
\author{\c{C}ET\.{I}N YILDIZ$^{\blacktriangle ,\bigstar }$}
\email{yildizc@atauni.edu.tr}
\author{HAVVA KAVURMACI$^{\blacktriangle }$}
\email{hkavurmaci@atauni.edu.tr}
\thanks{$^{\bigstar }$Corresponding Author.}
\subjclass[2000]{ 26A15, 26A51, 26D10.}
\keywords{Hadamard's Inequality, Riemann-Liouville Fractional Integration,
Minkowski's Inequality.}

\begin{abstract}
In this paper, we establish several inequalities for different$\ $convex
mappings that are connected with the Riemann-Liouville fractional integrals.
Our results have some relationships with certain integral inequalities in
the literature.
\end{abstract}

\maketitle

\section{INTRODUCTION}

Let $f:I\subseteq 
\mathbb{R}
\rightarrow 
\mathbb{R}
$ be a convex function and let $a,b\in I,$ with $a<b.$ The following
inequality;%
\begin{equation}
\ \ \ f\left( \frac{a+b}{2}\right) \leq \frac{1}{b-a}\int_{a}^{b}f(x)dx\leq 
\frac{f(a)+f(b)}{2}  \label{a}
\end{equation}%
is known in the literature as Hadamard's inequality. Both inequalities hold
in the reversed direction if $f$ is concave.

In \cite{GL}, Godunova and Levin introduced the following class of functions.

\begin{definition}
A function $f:I\subseteq 
\mathbb{R}
\rightarrow 
\mathbb{R}
$ is said to belong to the class of $Q(I)$ if it is nonnegative and for all $%
x,y\in I$ and $\lambda \in (0,1)$ satisfies the inequality;%
\begin{equation}
f(\lambda x+(1-\lambda )y)\leq \frac{f(x)}{\lambda }+\frac{f(y)}{1-\lambda }.
\label{2}
\end{equation}
\end{definition}

They also noted that all nonnegative monotonic and nonnegative convex
functions belong to this class and also proved the following motivating
result:

If $f\in Q(I)$ and $x,y,z\in I,$ then%
\begin{equation}
f(x)(x-y)(x-z)+f(y)(y-x)(y-z)+f(z)(z-x)(z-y)\geq 0.  \label{1}
\end{equation}%
In fact (\ref{1}) is even equivalent to (\ref{2}). So it can alternatively
be used in the definition of the class $Q(I).$

In \cite{DPP}, Dragomir et.al., defined the following new class of functions.

\begin{definition}
A function $f:I\subseteq 
\mathbb{R}
\rightarrow 
\mathbb{R}
$ is $P$ function or that $f$ belongs to the class of $P(I),$ if it is
nonnegative and for all $x,y\in I$ and $\lambda \in \lbrack 0,1],$ satisfies
the following inequality;%
\begin{equation}
f(\lambda x+(1-\lambda )y)\leq f(x)+f(y).  \label{6}
\end{equation}%
The power mean $M_{r}(x,y;\lambda )$ of order $r$ of positive numbers $x,y$
is defined by%
\begin{equation*}
M_{r}(x,y;\lambda )=\left\{ 
\begin{array}{cc}
\left( \lambda x^{r}+\left( 1-\lambda \right) y^{r}\right) ^{\frac{1}{r}}, & 
r\neq 0 \\ 
x^{\lambda }y^{1-\lambda }, & r=0.%
\end{array}%
\right. 
\end{equation*}
\end{definition}

In \cite{SIMIC}, Pearce et al. generalized this inequality to $r-$convex
positive function $f$ which is defined on an interval $[a,b],$ for all $%
x,y\in \lbrack a,b]$ and $\lambda \in \lbrack 0,1];$%
\begin{equation*}
f(\lambda x+(1-\lambda )y)\leq M_{r}(f\left( x\right) ,f\left( y\right)
;\lambda )=\left\{ 
\begin{array}{cc}
\left( \lambda \left[ f(x)\right] ^{r}+(1-\lambda )\left[ f(y)\right]
^{r}\right) ^{\frac{1}{r}}, & if\text{ }r\neq 0 \\ 
\text{ }\left[ f(x)\right] ^{\lambda }\left[ f(y)\right] ^{1-\lambda } & 
\text{\ }if\text{ }r=0%
\end{array}%
\right. .
\end{equation*}%
We have that $0-$convex functions are simply log-convex functions and $1-$%
convex functions are ordinary convex functions.

In \cite{varr}, Varo\v{s}anec introduced the following class of functions.

\begin{definition}
Let $h:J\subset 
\mathbb{R}
\rightarrow 
\mathbb{R}
$ be a positive function. We say that $f:I\subset 
\mathbb{R}
\rightarrow 
\mathbb{R}
$ is $h-$convex function or that $f$ belongs to the class $SX(h,I)$, if $f$
is nonnegative and for all $x,y\in I$ and $\lambda \in (0,1)$ we have%
\begin{equation}
f(\lambda x+(1-\lambda )y)\leq h(\lambda )f(x)+h(1-\lambda )f(y).  \label{9}
\end{equation}
\end{definition}

If the inequality in (\ref{9}) is reversed, then $f$ is said to be $h-$%
concave, i.e., $f\in SV(h,I).$

Obviously, if $h\left( \lambda \right) =\lambda $, then all nonnegative
convex functions belong to $SX\left( h,I\right) $\ and all nonnegative
concave functions belong to $SV(h,I);$ if $h(\lambda )=\frac{1}{\lambda },$
then $SX(h,I)=Q(I);$ if $h(\lambda )=1,$ then $SX(h,I)\supseteq P(I)$ and if 
$h(\lambda )=\lambda ^{s},$ where $s\in \left( 0,1\right) ,$ then $%
SX(h,I)\supseteq K_{s}^{2}.$ For some recent results for $h-$convex
functions we refer to the interested reader to the papers \cite{SA2}, \cite%
{OZ1} and \cite{BU}.

In \cite{DPP}, Dragomir et.al. proved two inequalities of Hadamard type for
class of Godunova-Levin functions and $P-$ functions.

\begin{theorem}
Let $f\in Q(I),$ $a,b\in I$ with $a<b$ and $f\in L_{1}[a,b].$ Then the
following inequality holds:%
\begin{equation}
\ \ \ f\left( \frac{a+b}{2}\right) \leq \frac{4}{b-a}\int_{a}^{b}f(x)dx.
\label{a.1.1}
\end{equation}
\end{theorem}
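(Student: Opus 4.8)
The plan is to exploit the defining inequality of the Godunova-Levin class at the midpoint. Since $f\in Q(I)$, taking $x,y\in I$ and $\lambda=\tfrac12$ in (\ref{2}) gives
\begin{equation*}
f\left(\frac{x+y}{2}\right)\leq \frac{f(x)}{1/2}+\frac{f(y)}{1/2}=2\bigl(f(x)+f(y)\bigr).
\end{equation*}
The key idea is to substitute a symmetric pair of points that average to $\tfrac{a+b}{2}$, namely $x=ta+(1-t)b$ and $y=(1-t)a+tb$ for $t\in[0,1]$, since then $\frac{x+y}{2}=\frac{a+b}{2}$ regardless of $t$.

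First I would write, for each $t\in[0,1]$,
\begin{equation*}
f\left(\frac{a+b}{2}\right)\leq 2\Bigl(f\bigl(ta+(1-t)b\bigr)+f\bigl((1-t)a+tb\bigr)\Bigr),
\end{equation*}
which holds pointwise. Next I would integrate both sides over $t\in[0,1]$; the left-hand side is constant and contributes $f\!\left(\frac{a+b}{2}\right)$, while on the right the two integrals are equal by the change of variable $t\mapsto 1-t$, so they combine into $4\int_0^1 f\bigl(ta+(1-t)b\bigr)\,dt$.

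Finally I would perform the linear substitution $u=ta+(1-t)b$, which sends $t\in[0,1]$ to $u\in[a,b]$ with $du=-(b-a)\,dt$, converting $\int_0^1 f\bigl(ta+(1-t)b\bigr)\,dt$ into $\frac{1}{b-a}\int_a^b f(u)\,du$. This yields exactly the claimed bound $f\!\left(\frac{a+b}{2}\right)\leq \frac{4}{b-a}\int_a^b f(x)\,dx$, and the hypothesis $f\in L_1[a,b]$ guarantees all integrals are finite.

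The argument is almost entirely routine; the one step requiring genuine care is recognizing the right pairing of points. The main obstacle is spotting that one must feed the \emph{symmetric} pair $ta+(1-t)b$ and $(1-t)a+tb$ into (\ref{2}) rather than applying it to the fixed endpoints $a$ and $b$, because it is precisely the symmetry under $t\mapsto 1-t$ that collapses the two boundary terms into a single integral and produces the constant $4$ in the final estimate.
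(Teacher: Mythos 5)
Your proof is correct and follows essentially the same route as the paper: the paper's proof of its fractional generalization (Theorem 6) uses exactly your midpoint inequality with $\lambda=\tfrac12$ and the symmetric substitution $x=ta+(1-t)b$, $y=(1-t)a+tb$, then multiplies by $t^{\alpha-1}$ and integrates, which for $\alpha=1$ reduces to your argument verbatim. The only difference is cosmetic: the paper treats the two resulting integrals by separate changes of variable rather than invoking the $t\mapsto 1-t$ symmetry, but the computation is identical.
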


\begin{theorem}
Let $f\in P(I),$ $a,b\in I$ with $a<b$ and $f\in L_{1}[a,b].$ Then the
following inequality holds:%
\begin{equation}
f\left( \frac{a+b}{2}\right) \leq \frac{2}{b-a}\int_{a}^{b}f(x)dx\leq
2[f(a)+f(b)].  \label{a.1.2}
\end{equation}
\end{theorem}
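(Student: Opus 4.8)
The plan is to prove the two inequalities separately, in each case parametrizing the segment $[a,b]$ by $x=\lambda a+(1-\lambda)b$ and integrating the defining inequality (\ref{6}) over $\lambda\in[0,1]$, then passing back to the variable $x$ via the substitution $u=\lambda a+(1-\lambda)b$, for which $du=-(b-a)\,d\lambda$ and hence $\int_0^1 f(\lambda a+(1-\lambda)b)\,d\lambda=\frac{1}{b-a}\int_a^b f(u)\,du$.

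For the right-hand inequality I would simply invoke the $P$-function property (\ref{6}) with the fixed endpoints $x=a$ and $y=b$, which yields $f(\lambda a+(1-\lambda)b)\le f(a)+f(b)$ uniformly in $\lambda\in[0,1]$. Integrating this over $\lambda\in[0,1]$ and applying the change of variables above gives $\frac{1}{b-a}\int_a^b f(u)\,du\le f(a)+f(b)$; multiplying by $2$ delivers the claimed upper bound.

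For the left-hand inequality the key idea is to write the midpoint as a convex combination of two symmetric points of the segment: since $\frac{a+b}{2}=\frac12\left[\lambda a+(1-\lambda)b\right]+\frac12\left[(1-\lambda)a+\lambda b\right]$, applying (\ref{6}) with $\frac12$ as the splitting parameter gives
\begin{equation*}
f\left(\frac{a+b}{2}\right)\le f(\lambda a+(1-\lambda)b)+f((1-\lambda)a+\lambda b).
\end{equation*}
Integrating both sides over $\lambda\in[0,1]$ and noting that, after the substitutions $u=\lambda a+(1-\lambda)b$ and $u=(1-\lambda)a+\lambda b$ respectively, each integral on the right reduces to $\frac{1}{b-a}\int_a^b f(u)\,du$, I obtain $f\left(\frac{a+b}{2}\right)\le \frac{2}{b-a}\int_a^b f(u)\,du$.

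I expect no serious obstacle: the only point requiring care is the correct symmetric decomposition of the midpoint, so that (\ref{6}) can be applied with a single fixed splitting parameter while the two resulting arguments sweep out the whole interval $[a,b]$ as $\lambda$ ranges over $[0,1]$. The hypothesis $f\in L_{1}[a,b]$ is precisely what guarantees that all the integrals above are finite and that the changes of variable are legitimate.
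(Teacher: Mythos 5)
Your proof is correct and follows essentially the same route as the paper: the paper establishes this statement as the $\alpha=1$ case of its fractional-integral Theorem \ref{cettt}, whose proof uses exactly your symmetric parametrization $x=ta+(1-t)b$, $y=(1-t)a+tb$ with splitting parameter $\tfrac12$ in (\ref{6}) for the left inequality, and the endpoint bounds $f(ta+(1-t)b)\le f(a)+f(b)$, $f((1-t)a+tb)\le f(a)+f(b)$ for the right, all integrated against the weight $t^{\alpha-1}$ (which is identically $1$ when $\alpha=1$). The only cosmetic difference is that the paper adds the two endpoint inequalities before integrating, whereas you integrate one and multiply by $2$, which amounts to the same computation.
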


In \cite{PTT}, Ngoc et al., established following theorem for $r-$convex
functions:

\begin{theorem}
Let $f:[a,b]\rightarrow (0,\infty )$ be $r-$convex function on $[a,b]$ with $%
a<b.$Then the following inequality holds for $0<r\leq 1$:%
\begin{equation}
\frac{1}{b-a}\dint\limits_{a}^{b}f(x)dx\leq \left( \frac{r}{r+1}\right) ^{%
\frac{1}{r}}\left( f^{r}(a)+f^{r}(b)\right) ^{\frac{1}{r}}.  \label{1.1}
\end{equation}
\end{theorem}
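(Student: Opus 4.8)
The plan is to reduce the averaged integral to the reference interval $[0,1]$, invoke the defining inequality of $r$-convexity, and then estimate the resulting one-variable power integral. First I would use the change of variables $x = ta + (1-t)b$, $t \in [0,1]$, under which $dx = -(b-a)\,dt$ and the endpoints map as $a \leftrightarrow t=1$, $b \leftrightarrow t=0$. This turns the normalized integral into
\begin{equation*}
\frac{1}{b-a}\int_a^b f(x)\,dx = \int_0^1 f(ta + (1-t)b)\,dt,
\end{equation*}
a purely mechanical step that carries no difficulty.

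Next I would apply the hypothesis that $f$ is $r$-convex with $0 < r \le 1$ to the argument $ta + (1-t)b$, which yields the pointwise bound
\begin{equation*}
f(ta+(1-t)b) \le \left(t\,f^r(a) + (1-t)\,f^r(b)\right)^{\frac1r}.
\end{equation*}
Integrating over $t \in [0,1]$ then reduces everything to the purely analytic estimate
\begin{equation*}
\int_0^1 \left(t\,f^r(a) + (1-t)\,f^r(b)\right)^{\frac1r}\,dt \le \left(\frac{r}{r+1}\right)^{\frac1r}\left(f^r(a)+f^r(b)\right)^{\frac1r}.
\end{equation*}

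This last inequality is where the real work lies, and I expect it to be the main obstacle. Writing $A = f^r(a)$ and $B = f^r(b)$, the integral can be evaluated exactly as $\frac{r}{r+1}\cdot\frac{A^{(r+1)/r} - B^{(r+1)/r}}{A-B}$, a generalized-logarithmic-mean expression, so the crux is to dominate this by the clean closed form on the right using only $0 < r \le 1$. I would attempt this either through a power-mean (Jensen-type) estimate for the map $s \mapsto s^{1/r}$ against the uniform measure on $[0,1]$, or through the factorization $tA+(1-t)B = (t^{1/r}A^{1/r})^r + ((1-t)^{1/r}B^{1/r})^r$ combined with the norm-comparison and Minkowski-type relations in the range $0 < r \le 1$, together with the identity $\int_0^1 t^{1/r}\,dt = \int_0^1 (1-t)^{1/r}\,dt = \frac{r}{r+1}$, which is exactly where the constant $\frac{r}{r+1}$ should enter.

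The delicate point, and the step I would scrutinize most carefully, is the direction of each estimate when $r < 1$: there $s \mapsto s^{1/r}$ is convex rather than concave, so the naive Jensen bound runs the wrong way, and matching the precise exponent $\tfrac1r$ on the constant $\tfrac{r}{r+1}$ (rather than the first power) is a genuinely subtle calibration. I would therefore treat the verification of this closed-form constant as the heart of the argument, checking the boundary behavior as $f(a) \to f(b)$ and at $r = 1$ to make sure the claimed inequality is correctly oriented before committing to a particular estimate.
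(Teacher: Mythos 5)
Your reduction to the one‑variable estimate
\begin{equation*}
\int_0^1 \bigl(t\,f^r(a)+(1-t)\,f^r(b)\bigr)^{\frac1r}\,dt \;\le\; \Bigl(\tfrac{r}{r+1}\Bigr)^{\frac1r}\bigl(f^r(a)+f^r(b)\bigr)^{\frac1r}
\end{equation*}
is the right first move, but the proposal stops exactly where the proof has to happen, and the check you yourself prescribe (letting $f(a)\to f(b)$) shows the step cannot be closed: a positive constant function is $r$-convex for every $r$, and with $A=B=f^r(a)$ the left side equals $A^{1/r}=f(a)$ while the right side equals $\bigl(\tfrac{2r}{r+1}\bigr)^{1/r}f(a)$, where $\tfrac{2r}{r+1}<1$ whenever $0<r<1$. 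So the displayed estimate, and with it the quoted inequality (\ref{1.1}) itself, is \emph{false} for every $0<r<1$; it holds only at $r=1$, where it is the classical right-hand Hadamard bound. Your instinct that the exponent $\tfrac1r$ on the constant is a ``genuinely subtle calibration'' was exactly right: no Jensen, power-mean, or Minkowski argument can produce it, because the target is wrong as stated.

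What is true, and what the Minkowski route you sketch actually delivers, is the same bound with the constant $\tfrac{r}{r+1}$ to the \emph{first} power: writing $tA+(1-t)B=\bigl(t^{1/r}f(a)\bigr)^r+\bigl((1-t)^{1/r}f(b)\bigr)^r$ and using the triangle inequality in $L^{1/r}[0,1]$ (valid since $1/r\ge 1$) gives
\begin{equation*}
\int_0^1 \bigl(tA+(1-t)B\bigr)^{\frac1r}dt \;\le\; \Bigl[\Bigl(\int_0^1 t^{\frac1r}f(a)\,dt\Bigr)^{r}+\Bigl(\int_0^1 (1-t)^{\frac1r}f(b)\,dt\Bigr)^{r}\Bigr]^{\frac1r} \;=\; \frac{r}{r+1}\bigl(f^r(a)+f^r(b)\bigr)^{\frac1r},
\end{equation*}
and this constant is sharp (the ratio of the two sides tends to $1$ as $f^r(b)/f^r(a)\to 0$). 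This is precisely the argument the paper uses for its fractional generalization, Theorem \ref{tinn}: setting $\alpha=1$ there yields the factor $\tfrac{r}{r+1}$, not $\bigl(\tfrac{r}{r+1}\bigr)^{1/r}$, so the paper's remark that Theorem \ref{tinn} reduces to (\ref{1.1}) silently passes over the same discrepancy — note $\bigl(\tfrac{r}{r+1}\bigr)^{1/r}<\tfrac{r}{r+1}$ for $r<1$, so the quoted form is strictly stronger and, as shown above, too strong. In short, the gap in your proposal is not a missing trick but the falsity of the target: replace the constant by $\tfrac{r}{r+1}$ and your Minkowski variant completes the proof.
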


For related results on $r-$convexity see the papers \cite{GS} and \cite{GILL}%
.

In \cite{zeki}, Sar\i kaya et al. proved the following Hadamard type
inequalities for $h-$convex functions.

\begin{theorem}
Let $f\in SX(h,I),$ $a,b\in I$ with $a<b$ and $f\in L_{1}[a,b].$ Then%
\begin{equation}
\frac{1}{2h\left( \frac{1}{2}\right) }f\left( \frac{a+b}{2}\right) \leq 
\frac{1}{b-a}\int_{a}^{b}f(x)dx\leq \lbrack f(a)+f(b)]\int_{0}^{1}h(\alpha
)d\alpha .  \label{10}
\end{equation}
\end{theorem}

In \cite{zeki2}, Sar\i kaya et al. proved the following Hadamard type
inequalities for fractional integrals as follows.

\begin{theorem}
\label{diz} Let $f:[a,b]\rightarrow 
\mathbb{R}
$ be positive function with $0\leq a<b$ and $f\in L_{1}[a,b].$ If $f$ is
convex function on $[a,b]$, then the following inequalities for fractional
integrals hold:%
\begin{equation}
f\left( \frac{a+b}{2}\right) \leq \frac{\Gamma (\alpha +1)}{2(b-a)^{\alpha }}%
\left[ J_{a^{+}}^{\alpha }(b)+J_{b^{-}}^{\alpha }(a)\right] \leq \frac{%
f(a)+f(b)}{2}  \label{16}
\end{equation}%
with $\alpha >0.$
\end{theorem}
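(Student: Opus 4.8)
The plan is to mimic the classical proof of the Hermite--Hadamard inequality \eqref{a}, replacing the flat weight by the kernel $t^{\alpha-1}$ that is built into the Riemann--Liouville integrals. Throughout I write
\[
J_{a^{+}}^{\alpha}f(b)=\frac{1}{\Gamma(\alpha)}\int_{a}^{b}(b-t)^{\alpha-1}f(t)\,dt,\qquad
J_{b^{-}}^{\alpha}f(a)=\frac{1}{\Gamma(\alpha)}\int_{a}^{b}(t-a)^{\alpha-1}f(t)\,dt,
\]
and the key book-keeping identity is $\Gamma(\alpha+1)=\alpha\Gamma(\alpha)$.

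For the \emph{left} inequality I would start from the midpoint form of convexity: for every $t\in[0,1]$ the two points $ta+(1-t)b$ and $(1-t)a+tb$ average to $\tfrac{a+b}{2}$, so
\[
f\!\left(\frac{a+b}{2}\right)\le\frac{1}{2}\Big[f\big(ta+(1-t)b\big)+f\big((1-t)a+tb\big)\Big].
\]
I would then multiply through by $t^{\alpha-1}\ge0$ and integrate in $t$ over $[0,1]$. The left-hand side produces $\tfrac{1}{\alpha}f(\tfrac{a+b}{2})$, while the two terms on the right (carrying the factor $\tfrac12$) are handled by the substitutions $x=ta+(1-t)b$ and $x=(1-t)a+tb$ respectively; each substitution turns $\int_{0}^{1}t^{\alpha-1}f(\cdot)\,dt$ into $(b-a)^{-\alpha}\Gamma(\alpha)$ times one of the two one-sided fractional integrals above. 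Collecting terms and multiplying by $\alpha$ gives exactly the left inequality in \eqref{16}.

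For the \emph{right} inequality I would instead invoke convexity in its defining form, $f(ta+(1-t)b)\le tf(a)+(1-t)f(b)$ and $f((1-t)a+tb)\le(1-t)f(a)+tf(b)$; adding these makes the $t$-dependence cancel on the right, leaving $f(a)+f(b)$. Multiplying by $t^{\alpha-1}$, integrating over $[0,1]$, and applying the same two substitutions converts the left side into $(b-a)^{-\alpha}\Gamma(\alpha)\big[J_{a^{+}}^{\alpha}f(b)+J_{b^{-}}^{\alpha}f(a)\big]$ and the right side into $\tfrac{1}{\alpha}[f(a)+f(b)]$; one final multiplication by $\tfrac{\alpha}{2}$ closes the chain.

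I do not anticipate a genuine obstacle here---the argument is an application of convexity followed by two change-of-variables computations. The only point demanding care is the matching step: one must check that the substitution sending $[0,1]$ onto $[a,b]$ reproduces the kernel $(b-x)^{\alpha-1}$ for the $a^{+}$ integral and $(x-a)^{\alpha-1}$ for the $b^{-}$ integral (so the two symmetric halves land on the two \emph{different} one-sided integrals rather than both on the same one), and that the Jacobian contributes the correct power $(b-a)^{-\alpha}$. Tracking the constant $\Gamma(\alpha+1)=\alpha\Gamma(\alpha)$ through these steps is what produces the precise normalisation $\tfrac{\Gamma(\alpha+1)}{2(b-a)^{\alpha}}$ appearing in \eqref{16}.
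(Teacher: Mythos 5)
Your proof is correct: both halves go through exactly as you describe --- the midpoint-convexity bound and the added endpoint bounds, each weighted by $t^{\alpha-1}$ and integrated over $[0,1]$, with the substitutions $u=ta+(1-t)b$ and $v=(1-t)a+tb$ producing the kernels $(b-u)^{\alpha-1}$ and $(v-a)^{\alpha-1}$, hence $J_{a^{+}}^{\alpha}f(b)$ and $J_{b^{-}}^{\alpha}f(a)$ respectively, and the constants $\Gamma(\alpha+1)=\alpha\Gamma(\alpha)$ and $(b-a)^{-\alpha}$ landing where they should. Structurally, though, your route differs from the paper's: the paper never proves Theorem \ref{diz} directly. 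The statement is quoted from \cite{zeki2}, and within the paper it is obtained only as a corollary --- the first bullet of the remark following Theorem \ref{yil} notes that setting $h(t)=t$ in the $h$-convex inequalities (\ref{11}) recovers (\ref{16}). The computational engine of the paper's proof of Theorem \ref{yil} is precisely the one you use (symmetric estimates at $ta+(1-t)b$ and $(1-t)a+tb$, multiplication by $t^{\alpha-1}$, integration, change of variables), so your argument is in effect the $h(t)=t$ instance of the paper's general computation, written out directly. What your version buys is a short, self-contained proof of the convex case; what the paper's version buys is generality, since the single $h$-convex computation yields at once the convex case ($h(t)=t$), the $P(I)$ case ($h\equiv 1$), and bounds for arbitrary admissible $h$.
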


Now we give some necessary definitions and mathematical preliminaries of
fractional calculus theory which are used throughout this paper.

\begin{definition}
Let $f\in L_{1}[a,b].$ The Riemann-Liouville integrals $J_{a^{+}}^{\alpha }f$
and $J_{b^{-}}^{\alpha }f$ of order $\alpha >0$ with $a\geq 0$ are defined by%
\begin{equation*}
J_{a^{+}}^{\alpha }f(x)=\frac{1}{\Gamma (\alpha )}\underset{a}{\overset{x}{%
\int }}\left( x-t\right) ^{\alpha -1}f(t)dt,\text{ \ }x>a
\end{equation*}%
and%
\begin{equation*}
J_{b^{-}}^{\alpha }f(x)=\frac{1}{\Gamma (\alpha )}\underset{x}{\overset{b}{%
\int }}\left( t-x\right) ^{\alpha -1}f(t)dt,\text{ \ }x<b
\end{equation*}%
respectively where $\Gamma (\alpha )=\underset{0}{\overset{\infty }{\int }}%
e^{-u}u^{\alpha -1}du.$ Here is $J_{a^{+}}^{0}f(x)=J_{b^{-}}^{0}f(x)=f(x).$
\end{definition}

In the case of $\alpha =1$, the fractional integral reduces to the classical
integral.

For some recent results connected with \ fractional integral inequalities
see \cite{anastas}-\cite{dahtab} and \cite{zeki2}.

The main purpose of this paper is to present new Hadamard's inequalities for
fractional integrals via functions that belongs to the classes of $Q(I),$ $%
P(I),$ $SX(h,I)$ and $r-$convex.

\section{MAIN RESULTS}

\begin{theorem}
Let $f\in Q(I),$ $a,b\in I$ with $0\leq a<b$ and $f\in L_{1}[a,b].$ Then the
following inequality for fractional integrals hold:%
\begin{equation}
\ f\left( \frac{a+b}{2}\right) \leq \frac{2\Gamma (\alpha +1)}{(b-a)^{\alpha
}}\left[ J_{a^{+}}^{\alpha }(b)+J_{b^{-}}^{\alpha }(a)\right]  \label{4}
\end{equation}
with $\alpha >0.$
\end{theorem}

\begin{proof}
Since $f\in Q(I),$ we have%
\begin{equation*}
2\left( f(x)+f(y)\right) \geq f\left( \frac{x+y}{2}\right)
\end{equation*}%
for all $x,y\in I$ (with $\lambda =\frac{1}{2}$ in (1.2)).

If we choose $x=ta+(1-t)b$ and $y=(1-t)a+tb$ in above inequality, we get 
\begin{equation}
2\left[ f(ta+(1-t)b)+f((1-t)a+tb)\right] \geq f\left( \frac{a+b}{2}\right) .
\label{3}
\end{equation}%
Then multiplying both sides of (\ref{3}) by $t^{\alpha -1}$ and integrating
the resulting inequality with respect to $t$ over $[0,1]$, we obtain%
\begin{eqnarray*}
2\int_{0}^{1}t^{\alpha -1}\left[ f(ta+(1-t)b)+f((1-t)a+tb)\right] dt &\geq
&f\left( \frac{a+b}{2}\right) \int_{0}^{1}t^{\alpha -1}dt \\
2\int_{a}^{b}\left( \frac{b-u}{b-a}\right) ^{\alpha -1}f(u)\frac{du}{b-a}%
+2\int_{a}^{b}\left( \frac{v-a}{b-a}\right) ^{\alpha -1}f(v)\frac{dv}{b-a}
&\geq &\frac{1}{\alpha }f\left( \frac{a+b}{2}\right)  \\
\frac{2\Gamma (\alpha +1)}{(b-a)^{\alpha }}\left[ J_{a^{+}}^{\alpha
}(b)+J_{b^{-}}^{\alpha }(a)\right]  &\geq &f\left( \frac{a+b}{2}\right) .
\end{eqnarray*}%
The proof is complete.
\end{proof}

\begin{remark}
If we choose $\alpha =1$ in Theorem 6, then the inequalities (\ref{4})
become the inequalities (\ref{a.1.1}).
\end{remark}

\begin{theorem}
\label{cettt} Let $f\in P(I),$ $a,b\in I$ with $a<b$ and $f\in L_{1}[a,b].$
Then one has inequality for fractional integrals:%
\begin{equation}
f\left( \frac{a+b}{2}\right) \leq \frac{\Gamma (\alpha +1)}{(b-a)^{\alpha }}%
\left[ J_{a^{+}}^{\alpha }(b)+J_{b^{-}}^{\alpha }(a)\right] \leq 2\left(
f(a)+f(b)\right)  \label{5}
\end{equation}%
with $\alpha >0.$
\end{theorem}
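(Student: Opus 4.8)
The plan is to mirror the proof of Theorem 6, replacing the Godunova-Levin inequality with the defining inequality for the class $P(I)$. The theorem is a two-sided estimate, so I would establish the left-hand inequality and the right-hand inequality separately, each by a weighted integration against the kernel $t^{\alpha-1}$.

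For the \textbf{left-hand inequality}, I would start from the $P$-function property \eqref{6} with $\lambda=\frac{1}{2}$, which gives
\begin{equation*}
f\left( \frac{x+y}{2}\right) \leq f(x)+f(y)
\end{equation*}
for all $x,y\in I$. Substituting $x=ta+(1-t)b$ and $y=(1-t)a+tb$ (so that $\frac{x+y}{2}=\frac{a+b}{2}$) yields
\begin{equation*}
f\left( \frac{a+b}{2}\right) \leq f(ta+(1-t)b)+f((1-t)a+tb).
\end{equation*}
I would then multiply both sides by $t^{\alpha-1}$ and integrate over $t\in[0,1]$. The left side produces $\frac{1}{\alpha}f\left(\frac{a+b}{2}\right)$, while the two terms on the right, after the substitutions $u=ta+(1-t)b$ and $v=(1-t)a+tb$, each convert into a Riemann-Liouville integral, giving $\frac{\Gamma(\alpha)}{(b-a)^{\alpha}}\left[J_{a^{+}}^{\alpha}(b)+J_{b^{-}}^{\alpha}(a)\right]$. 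Multiplying through by $\alpha$ and using $\alpha\Gamma(\alpha)=\Gamma(\alpha+1)$ gives the left inequality.

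For the \textbf{right-hand inequality}, I would apply \eqref{6} directly to each of $f(ta+(1-t)b)$ and $f((1-t)a+tb)$, bounding each by $f(a)+f(b)$, so that
\begin{equation*}
f(ta+(1-t)b)+f((1-t)a+tb)\leq 2\left(f(a)+f(b)\right).
\end{equation*}
Multiplying by $t^{\alpha-1}$ and integrating over $[0,1]$, the left side again yields $\frac{\Gamma(\alpha)}{(b-a)^{\alpha}}\left[J_{a^{+}}^{\alpha}(b)+J_{b^{-}}^{\alpha}(a)\right]$ (the same fractional-integral expression as before), while the right side integrates to $\frac{2(f(a)+f(b))}{\alpha}$. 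Multiplying by $\alpha$ closes the chain.

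The step I expect to demand the most care is the change of variables converting $\int_0^1 t^{\alpha-1}f(ta+(1-t)b)\,dt$ and its companion into the two Riemann-Liouville pieces, since one must track the Jacobian $\frac{1}{b-a}$ and verify that the factor $\left(\frac{b-u}{b-a}\right)^{\alpha-1}$ (respectively $\left(\frac{v-a}{b-a}\right)^{\alpha-1}$) matches the kernel in the definition of $J_{a^{+}}^{\alpha}$ and $J_{b^{-}}^{\alpha}$; everything else is a routine bookkeeping of the constants $\alpha$ and $\Gamma(\alpha+1)$. Finally, I would note that setting $\alpha=1$ recovers the classical inequality \eqref{a.1.2}, consistent with the remark following Theorem 6.
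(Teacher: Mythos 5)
Your proposal is correct and follows essentially the same route as the paper's own proof: the left inequality via the $P$-property at $\lambda=\tfrac12$ with $x=ta+(1-t)b$, $y=(1-t)a+tb$, and the right inequality by bounding each of $f(ta+(1-t)b)$ and $f((1-t)a+tb)$ by $f(a)+f(b)$, in both cases multiplying by $t^{\alpha-1}$, integrating over $[0,1]$, and converting to Riemann--Liouville integrals by the substitutions you describe. The bookkeeping of the kernel and the factor $\alpha\Gamma(\alpha)=\Gamma(\alpha+1)$ is exactly as in the paper.
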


\begin{proof}
According to (\ref{6}) with $x=ta+(1-t)b,$ $y=(1-t)a+tb$ and $\lambda =\frac{%
1}{2},$ we find that%
\begin{equation}
f\left( \frac{a+b}{2}\right) \leq f(ta+(1-t)b)+f((1-t)a+tb)  \label{7}
\end{equation}%
for all $t\in \lbrack 0,1].$ Thus multiplying both sides of (\ref{7}) by $%
t^{\alpha -1}$ and integrating the resulting inequality with respect to $t$
over $[0,1]$, we have%
\begin{eqnarray*}
f\left( \frac{a+b}{2}\right) \int_{0}^{1}t^{\alpha -1}dt &\leq
&\int_{0}^{1}t^{\alpha -1}\left[ f(ta+(1-t)b)+f((1-t)a+tb)\right] dt \\
\frac{1}{\alpha }f\left( \frac{a+b}{2}\right)  &\leq &\frac{\Gamma (\alpha )%
}{(b-a)^{\alpha }}\left[ J_{a^{+}}^{\alpha }(b)+J_{b^{-}}^{\alpha }(a)\right]
\\
f\left( \frac{a+b}{2}\right)  &\leq &\frac{\Gamma (\alpha +1)}{(b-a)^{\alpha
}}\left[ J_{a^{+}}^{\alpha }(b)+J_{b^{-}}^{\alpha }(a)\right] 
\end{eqnarray*}%
and the first inequality is proved.

Since $f\in P(I)$, we have%
\begin{equation*}
f(ta+(1-t)b)\leq f(a)+f(b)
\end{equation*}%
and%
\begin{equation*}
f((1-t)a+tb)\leq f(a)+f(b).
\end{equation*}%
By adding these inequalities we get%
\begin{equation}
f(ta+(1-t)b)+f((1-t)a+tb)\leq 2\left[ f(a)+f(b)\right] .  \label{8}
\end{equation}%
Then multiplying both sides of (\ref{8}) by $t^{\alpha -1}$ and integrating
the resulting inequality with respect to $t$ over $[0,1]$, we have%
\begin{eqnarray*}
\int_{0}^{1}t^{\alpha -1}\left[ f(ta+(1-t)b)+f((1-t)a+tb)\right] dt &\leq &2%
\left[ f(a)+f(b)\right] \int_{0}^{1}t^{\alpha -1}dt \\
\frac{\Gamma (\alpha +1)}{(b-a)^{\alpha }}\left[ J_{a^{+}}^{\alpha
}(b)+J_{b^{-}}^{\alpha }(a)\right]  &\leq &2\left( f(a)+f(b)\right) 
\end{eqnarray*}%
and thus the second inequality is proved.
\end{proof}

\begin{remark}
If we choose $\alpha =1$ in Theorem \ref{cettt}, then the inequalities (\ref%
{5}) become the inequalities (\ref{a.1.2}).
\end{remark}

\begin{theorem}
\label{tinn}Let $f:[a,b]\rightarrow (0,\infty )$ be $r-$convex function on $%
[a,b]$ with $a<b$ and $0<r\leq 1.$Then the following inequality for
fractional integral inequlities holds$:$%
\begin{eqnarray*}
\frac{\Gamma (\alpha +1)}{(b-a)^{\alpha }}\left[ J_{a^{+}}^{\alpha
}(b)+J_{b^{-}}^{\alpha }(a)\right] &\leq &\left[ \left( \frac{1}{\alpha +%
\frac{1}{r}}\right) ^{r}\left[ f(a)\right] ^{r}+\left( \beta (\alpha ,\frac{%
r+1}{r})\right) ^{r}\left[ f(b)\right] ^{r}\right] ^{\frac{1}{r}} \\
&&+\left[ \left( \beta (\alpha ,\frac{r+1}{r})\right) ^{r}\left[ f(a)\right]
^{r}+\left( \frac{1}{\alpha +\frac{1}{r}}\right) ^{r}\left[ f(b)\right] ^{r}%
\right] ^{\frac{1}{r}}.
\end{eqnarray*}
\end{theorem}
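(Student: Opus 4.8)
The plan is to mirror the route used in the proofs of the two preceding theorems: invoke the defining inequality of the relevant convex class at the two symmetric arguments $ta+(1-t)b$ and $(1-t)a+tb$, integrate against the weight $t^{\alpha-1}$, and recognise the left-hand side as the Riemann--Liouville expression. Since $f$ is $r$-convex with $0<r\le 1$, its definition gives, for each $t\in[0,1]$,
\[
f(ta+(1-t)b)\le\bigl(t[f(a)]^{r}+(1-t)[f(b)]^{r}\bigr)^{1/r},
\]
together with the companion bound for $f((1-t)a+tb)$ obtained by exchanging $t$ and $1-t$. Adding these, multiplying by $t^{\alpha-1}$, and integrating over $[0,1]$ controls $\int_0^1 t^{\alpha-1}\bigl[f(ta+(1-t)b)+f((1-t)a+tb)\bigr]\,dt$ from above.

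Next I would evaluate the left member exactly as in the two preceding theorems: the substitutions $u=ta+(1-t)b$ and $v=(1-t)a+tb$ turn $\int_0^1 t^{\alpha-1}f(ta+(1-t)b)\,dt$ and its partner into the fractional integrals $J_{a^{+}}^{\alpha}(b)$ and $J_{b^{-}}^{\alpha}(a)$, producing the prefactor $\tfrac{\Gamma(\alpha+1)}{(b-a)^{\alpha}}$. The whole problem is thereby reduced to bounding the two right-hand integrals $\int_0^1 t^{\alpha-1}\bigl(t[f(a)]^{r}+(1-t)[f(b)]^{r}\bigr)^{1/r}\,dt$ and its symmetric counterpart.

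The decisive step, and the one I expect to be the main obstacle, is Minkowski's inequality (hence its appearance among the keywords). Writing $A=[f(a)]^{r}$, $B=[f(b)]^{r}$ and absorbing the weight via $t^{\alpha-1}=\bigl(t^{(\alpha-1)r}\bigr)^{1/r}$, I would recast
\[
t^{\alpha-1}\bigl(tA+(1-t)B\bigr)^{1/r}=\bigl(t^{(\alpha-1)r+1}A+t^{(\alpha-1)r}(1-t)B\bigr)^{1/r},
\]
so that the integral becomes $\|\,U+V\,\|$ in $L^{1/r}$ with $U=t^{(\alpha-1)r+1}A$ and $V=t^{(\alpha-1)r}(1-t)B$. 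Because $1/r\ge 1$, the integral form of Minkowski's inequality applies and splits the integral into its two contributions. Evaluating the resulting elementary integrals, $\int_0^1 t^{\alpha-1+1/r}\,dt=\tfrac{1}{\alpha+1/r}$ and $\int_0^1 t^{\alpha-1}(1-t)^{1/r}\,dt=\beta\bigl(\alpha,\tfrac{r+1}{r}\bigr)$, yields precisely the first bracketed term $\bigl[(\tfrac{1}{\alpha+1/r})^{r}[f(a)]^{r}+(\beta(\alpha,\tfrac{r+1}{r}))^{r}[f(b)]^{r}\bigr]^{1/r}$; the symmetric integral produces the second term with the roles of $f(a)$ and $f(b)$ interchanged. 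The delicate points are orienting Minkowski's inequality correctly for the exponent $1/r\ge 1$ and keeping the leading $\alpha$ / $\Gamma(\alpha+1)$ bookkeeping consistent on the two sides, exactly as in the earlier proofs.
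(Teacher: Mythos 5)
Your proposal follows the paper's proof essentially verbatim: the same two $r$-convexity bounds at the symmetric arguments $ta+(1-t)b$ and $(1-t)a+tb$, the same weighting by $t^{\alpha-1}$ and substitutions producing the Riemann--Liouville integrals, and the same application of Minkowski's inequality with exponent $1/r\ge 1$ followed by the evaluations $\int_0^1 t^{\alpha-1+1/r}\,dt=\frac{1}{\alpha+\frac{1}{r}}$ and $\int_0^1 t^{\alpha-1}(1-t)^{1/r}\,dt=\beta\left(\alpha,\frac{r+1}{r}\right)$; your explicit recasting $t^{\alpha-1}\left(tA+(1-t)B\right)^{1/r}=\left(t^{(\alpha-1)r+1}A+t^{(\alpha-1)r}(1-t)B\right)^{1/r}$ is simply a more careful rendering of the paper's terse ``Using Minkowski's inequality'' step. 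The bookkeeping worry you flag at the end is real, and neither you nor the paper resolves it: the substitutions yield the prefactor $\frac{\Gamma(\alpha)}{(b-a)^{\alpha}}$, not $\frac{\Gamma(\alpha+1)}{(b-a)^{\alpha}}$ as you assert, and the paper's own proof makes the same silent upgrade (its intermediate display has $\Gamma(\alpha)$, its final display $\Gamma(\alpha+1)=\alpha\,\Gamma(\alpha)$ with no justification), so what this argument genuinely establishes is the stated inequality with $\Gamma(\alpha)$ on the left-hand side --- both your sketch and the paper's proof are off by a factor of $\alpha$ relative to the theorem as stated, a discrepancy that cannot be absorbed here (unlike in the $Q(I)$ and $P(I)$ theorems, where a compensating $\int_0^1 t^{\alpha-1}\,dt=\frac{1}{\alpha}$ appears on the other side).
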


\begin{proof}
Since $f$ is $r$-convex function and $r>0$, we have%
\begin{equation*}
f(ta+(1-t)b)\leq \left( t\left[ f(a)\right] ^{r}+(1-t)\left[ f(b)\right]
^{r}\right) ^{\frac{1}{r}}
\end{equation*}%
and%
\begin{equation*}
f((1-t)a+tb)\leq \left( (1-t)\left[ f(a)\right] ^{r}+t\left[ f(b)\right]
^{r}\right) ^{\frac{1}{r}}
\end{equation*}%
for all $t\in \lbrack 0,1].$

By adding these inequalities we have%
\begin{equation*}
f(ta+(1-t)b)+f((1-t)a+tb)\leq \left( t\left[ f(a)\right] ^{r}+(1-t)\left[
f(b)\right] ^{r}\right) ^{\frac{1}{r}}+\left( (1-t)\left[ f(a)\right] ^{r}+t%
\left[ f(b)\right] ^{r}\right) ^{\frac{1}{r}}.
\end{equation*}%
Then multiplying both sides of above inequality by $t^{\alpha -1}$ and
integrating the resulting inequality with respect to $t$ over $[0,1]$, we
obtain%
\begin{eqnarray*}
&&\int_{0}^{1}t^{\alpha -1}\left[ f(ta+(1-t)b)+f((1-t)a+tb)\right] dt \\
&\leq &\int_{0}^{1}t^{\alpha -1}\left( t\left[ f(a)\right] ^{r}+(1-t)\left[
f(b)\right] ^{r}\right) ^{\frac{1}{r}}dt+\int_{0}^{1}t^{\alpha -1}\left(
(1-t)\left[ f(a)\right] ^{r}+t\left[ f(b)\right] ^{r}\right) ^{\frac{1}{r}%
}dt.
\end{eqnarray*}%
It is easy to observe that%
\begin{equation*}
\int_{0}^{1}t^{\alpha -1}\left[ f(ta+(1-t)b)+f((1-t)a+tb)\right] dt=\frac{%
\Gamma (\alpha )}{(b-a)^{\alpha }}\left[ J_{a^{+}}^{\alpha
}(b)+J_{b^{-}}^{\alpha }(a)\right]
\end{equation*}%
Using Minkowski's inequality, we have%
\begin{eqnarray*}
\int_{0}^{1}t^{\alpha -1}\left( t\left[ f(a)\right] ^{r}+(1-t)\left[ f(b)%
\right] ^{r}\right) ^{\frac{1}{r}}dt &\leq &\left[ \left(
\int_{0}^{1}t^{\alpha +\frac{1}{r}-1}f(a)dt\right) ^{r}+\left(
\int_{0}^{1}t^{\alpha -1}(1-t)^{\frac{1}{r}}f(b)dt\right) ^{r}\right] ^{%
\frac{1}{r}} \\
&=&\left[ \left( \frac{1}{\alpha +\frac{1}{r}}\right) ^{r}\left[ f(a)\right]
^{r}+\left( \beta (\alpha ,\frac{r+1}{r})\right) ^{r}\left[ f(b)\right] ^{r}%
\right] ^{\frac{1}{r}}
\end{eqnarray*}%
and similarly%
\begin{eqnarray*}
\int_{0}^{1}t^{\alpha -1}\left( (1-t)\left[ f(a)\right] ^{r}+t\left[ f(b)%
\right] ^{r}\right) ^{\frac{1}{r}} &\leq &\left[ \left(
\int_{0}^{1}t^{\alpha -1}(1-t)^{\frac{1}{r}}f(a)dt\right) ^{r}+\left(
\int_{0}^{1}t^{\alpha +\frac{1}{r}-1}f(b)dt\right) ^{r}\right] ^{\frac{1}{r}}
\\
&=&\left[ \left( \beta (\alpha ,\frac{r+1}{r})\right) ^{r}\left[ f(a)\right]
^{r}+\left( \frac{1}{\alpha +\frac{1}{r}}\right) ^{r}\left[ f(b)\right] ^{r}%
\right] ^{\frac{1}{r}}.
\end{eqnarray*}%
Thus%
\begin{eqnarray*}
\frac{\Gamma (\alpha +1)}{(b-a)^{\alpha }}\left[ J_{a^{+}}^{\alpha
}(b)+J_{b^{-}}^{\alpha }(a)\right] &\leq &\left[ \left( \frac{1}{\alpha +%
\frac{1}{r}}\right) ^{r}\left[ f(a)\right] ^{r}+\left( \beta (\alpha ,\frac{%
r+1}{r})\right) ^{r}\left[ f(b)\right] ^{r}\right] ^{\frac{1}{r}} \\
&&+\left[ \left( \beta (\alpha ,\frac{r+1}{r})\right) ^{r}\left[ f(a)\right]
^{r}+\left( \frac{1}{\alpha +\frac{1}{r}}\right) ^{r}\left[ f(b)\right] ^{r}%
\right] ^{\frac{1}{r}}.
\end{eqnarray*}%
This proof is complete.
\end{proof}

\begin{remark}
\bigskip In Theorem \ref{tinn}, if we choose $\alpha =1,$ then we obtain the
inequalities (\ref{1.1}).
\end{remark}

\begin{theorem}
\label{yil} Let $f\in SX(h,I),$ $a,b\in I$ with $a<b$ and $f\in L_{1}[a,b].$
Then one has inequality for $h-$convex functions via fractional integrals%
\begin{eqnarray}
\frac{1}{\alpha h\left( \frac{1}{2}\right) }f\left( \frac{a+b}{2}\right)
&\leq &\frac{\Gamma (\alpha )}{(b-a)^{\alpha }}\left[ J_{a^{+}}^{\alpha
}(b)+J_{b^{-}}^{\alpha }(a)\right]  \label{11} \\
&\leq &\left[ f(a)+f(b)\right] \int_{0}^{1}t^{\alpha -1}\left[ h(t)+h(1-t)%
\right] dt.  \notag
\end{eqnarray}
\end{theorem}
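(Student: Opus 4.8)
The plan is to mirror the two-sided argument already used for Theorem~6 and Theorem~\ref{cettt}, replacing the convexity and $P$-function bounds by the defining $h$-convexity inequality~(\ref{9}). The engine of each of these proofs is the substitution identity
\[
\int_{0}^{1}t^{\alpha -1}\left[ f(ta+(1-t)b)+f((1-t)a+tb)\right] dt=\frac{\Gamma (\alpha )}{(b-a)^{\alpha }}\left[ J_{a^{+}}^{\alpha }(b)+J_{b^{-}}^{\alpha }(a)\right],
\]
obtained from the changes of variable $u=ta+(1-t)b$ and $v=(1-t)a+tb$. I would therefore establish the two inequalities in~(\ref{11}) by bounding the bracketed integrand from below and from above, respectively.

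For the left-hand inequality, I would first set $\lambda=\tfrac{1}{2}$ together with $x=ta+(1-t)b$ and $y=(1-t)a+tb$ in~(\ref{9}), which gives
\[
f\!\left( \tfrac{a+b}{2}\right) \leq h\!\left( \tfrac{1}{2}\right) \left[ f(ta+(1-t)b)+f((1-t)a+tb)\right].
\]
Dividing by $h(1/2)$, multiplying by $t^{\alpha -1}$, and integrating over $[0,1]$ produces $\tfrac{1}{\alpha h(1/2)}f(\tfrac{a+b}{2})$ on the left, since $\int_{0}^{1}t^{\alpha -1}dt=1/\alpha$, and, via the substitution identity, $\tfrac{\Gamma (\alpha )}{(b-a)^{\alpha }}[J_{a^{+}}^{\alpha }(b)+J_{b^{-}}^{\alpha }(a)]$ on the right.

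For the right-hand inequality, I would instead apply~(\ref{9}) to each term separately, obtaining $f(ta+(1-t)b)\leq h(t)f(a)+h(1-t)f(b)$ and $f((1-t)a+tb)\leq h(1-t)f(a)+h(t)f(b)$; adding these yields the symmetric bound $[h(t)+h(1-t)][f(a)+f(b)]$. Multiplying by $t^{\alpha -1}$ and integrating over $[0,1]$, the left-hand side again collapses to the fractional-integral expression through the substitution identity, while the right-hand side factors as $[f(a)+f(b)]\int_{0}^{1}t^{\alpha -1}[h(t)+h(1-t)]dt$, which is exactly the claimed upper bound.

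Since the whole argument is a direct transcription of the earlier pattern, there is no genuine analytic obstacle; the only points demanding care are bookkeeping ones. In the change of variables one must match $(b-u)^{\alpha -1}$ with $J_{a^{+}}^{\alpha }(b)$ and $(v-a)^{\alpha -1}$ with $J_{b^{-}}^{\alpha }(a)$, absorbing the $1/\Gamma(\alpha)$ correctly so that the prefactor emerges as $\Gamma(\alpha)/(b-a)^{\alpha}$. Moreover, unlike in Theorem~6 and Theorem~\ref{cettt}, the factor $1/\alpha$ coming from $\int_{0}^{1}t^{\alpha -1}dt$ is here left attached to the midpoint term, yielding the stated coefficient $1/(\alpha h(1/2))$ rather than being absorbed into a $\Gamma(\alpha+1)$.
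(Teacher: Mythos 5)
Your proposal is correct and follows essentially the same route as the paper's own proof: the midpoint bound via $h$-convexity with $\lambda=\tfrac{1}{2}$ for the left inequality, the termwise bound $[h(t)+h(1-t)][f(a)+f(b)]$ for the right one, and the same change-of-variables identity converting the weighted integral into $\frac{\Gamma(\alpha)}{(b-a)^{\alpha}}\left[ J_{a^{+}}^{\alpha }(b)+J_{b^{-}}^{\alpha }(a)\right]$. The only cosmetic difference is that the paper states the upper-bound step for general $x,y$ before specializing to $x=a$, $y=b$, which changes nothing in substance.
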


\begin{proof}
According to (\ref{9}) with $x=ta+(1-t)b$, $y=(1-t)a+tb$ and $\alpha =\frac{1%
}{2}$ we find that%
\begin{eqnarray}
f\left( \frac{a+b}{2}\right)  &\leq &h\left( \frac{1}{2}\right)
f(ta+(1-t)b)+h\left( \frac{1}{2}\right) f((1-t)a+tb)  \label{h} \\
&\leq &h\left( \frac{1}{2}\right) \left[ f(ta+(1-t)b)+f((1-t)a+tb)\right] . 
\notag
\end{eqnarray}%
Then multiplying the firts inequalitiy in (\ref{h}) by $t^{\alpha -1}$ and
integrating the resulting inequality with respect to $t$ over $[0,1]$, we
obtain%
\begin{eqnarray}
f\left( \frac{a+b}{2}\right) \int_{0}^{1}t^{\alpha -1}dt &\leq &h\left( 
\frac{1}{2}\right) \int_{0}^{1}t^{\alpha -1}\left[ f(ta+(1-t)b)+f((1-t)a+tb)%
\right] dt  \notag \\
\frac{1}{\alpha h\left( \frac{1}{2}\right) }f\left( \frac{a+b}{2}\right) 
&\leq &\frac{\Gamma (\alpha )}{(b-a)^{\alpha }}\left[ J_{a^{+}}^{\alpha
}(b)+J_{b^{-}}^{\alpha }(a)\right]   \label{18}
\end{eqnarray}%
and the first inequality in (\ref{11}) is proved.

Since $f\in SX(h,I)$, we have%
\begin{equation*}
f(tx+(1-t)y)\leq h(t)f(x)+h(1-t)f(y)
\end{equation*}%
and%
\begin{equation*}
f((1-t)x+ty)\leq h(1-t)f(x)+h(t)f(y).
\end{equation*}%
By adding these inequalities we get%
\begin{equation}
f(tx+(1-t)y)+f((1-t)x+ty)\leq \left[ h(t)+h(1-t)\right] \left[ f(x)+f(y)%
\right] .  \label{12}
\end{equation}

By using (\ref{12}) with $x=a$ and $y=b$ we have%
\begin{equation}
f(ta+(1-t)b)+f((1-t)a+tb)\leq \left[ h(t)+h(1-t)\right] \left[ f(a)+f(b)%
\right] .  \label{13}
\end{equation}%
Then multiplying both sides of (\ref{13}) by $t^{\alpha -1}$ and integrating
the resulting inequality with respect to $t$ over $[0,1]$, we get%
\begin{eqnarray*}
&&\int_{0}^{1}t^{\alpha -1}\left[ f(ta+(1-t)b)+f((1-t)a+tb)\right] dt \\
&\leq &\int_{0}^{1}t^{\alpha -1}\left[ h(t)+h(1-t)\right] \left[ f(a)+f(b)%
\right] dt,
\end{eqnarray*}%
\begin{eqnarray}
&&\frac{\Gamma (\alpha )}{(b-a)^{\alpha }}\left[ J_{a^{+}}^{\alpha
}(b)+J_{b^{-}}^{\alpha }(a)\right]   \label{14} \\
&\leq &\left[ f(a)+f(b)\right] \int_{0}^{1}t^{\alpha -1}\left[ h(t)+h(1-t)%
\right] dt  \notag
\end{eqnarray}%
and thus the second inequality is proved. We obtain inequalities (\ref{11})
from (\ref{18}) and (\ref{14}).

The proof is complete.
\end{proof}

\begin{remark}
\end{remark}

\begin{itemize}
\item If we choose $h(t)=t$ in Theorem \ref{yil}, then the inequalities (\ref%
{11}) become the inequalities (\ref{16}) of Theorem \ref{diz}.

\item In Theorem \ref{yil}, if we take $\alpha =1,$ then we obtain the
inequalities (\ref{10}).

\item Let $\alpha =1.$ In Theorem \ref{yil}, if we choose $h(t)=t$ and $%
h(t)=1,$ then (\ref{11}) reduce to (\ref{a}) and (\ref{a.1.2}), respectively.
\end{itemize}

\end{document}